\newtheorem{theorem}{Theorem}
\newtheorem{lemma}[theorem]{Lemma}
\newtheorem{prop}[theorem]{Proposition}
\theoremstyle{remark}
\newtheorem{remark}[theorem]{Remark}
\newtheorem{example}[theorem]{Example}
\newtheorem*{acknowledgement}{Acknowledgement}
\numberwithin{equation}{section}
\begin{document}

\title{Toric Fano varieties associated to finite simple graphs}

\author{Yusuke Suyama}
\address{Department of Mathematics, Graduate School of Science, Osaka City University,
3-3-138 Sugimoto, Sumiyoshi-ku, Osaka 558-8585 JAPAN}
%\curraddr{}
\email{d15san0w03@st.osaka-cu.ac.jp}
%\thanks{}

\subjclass[2010]{Primary 14M25; Secondary 14J45, 05C30.}

\keywords{toric Fano variety, toric weak Fano variety, nested set}

\date{\today}

%\dedicatory{}

\begin{abstract}
We give a necessary and sufficient condition for the nonsingular projective toric variety
associated to a finite simple graph to be Fano or weak Fano in terms of the graph.
\end{abstract}

\maketitle

\section{Introduction}

A {\it toric variety} of complex dimension $n$
is a normal algebraic variety $X$ over $\mathbb{C}$
containing the algebraic torus $(\mathbb{C}^*)^n$ as an open dense subset,
such that the natural action of $(\mathbb{C}^*)^n$ on itself extends to an action on $X$.
The category of toric varieties is equivalent to the category of fans,
which are combinatorial objects.

A nonsingular projective algebraic variety is called {\it Fano} (resp.\ {\it weak Fano})
if its anticanonical divisor is ample (resp.\ nef and big).
The classification of toric Fano varieties is a fundamental problem
and many results are known.
In particular, {\O}bro \cite{Obr07} gave an algorithm
classifying all such varieties for any dimension.
Sato \cite{Sat02} classified toric weak Fano 3-folds
that are not Fano but are deformed to Fano,
which are called toric {\it weakened Fano} 3-folds.

There is a construction of nonsingular projective toric varieties from finite simple graphs,
that is, associated toric varieties of normal fans of graph associahedra \cite{Pos06}.
We give a necessary and sufficient condition for the nonsingular projective toric variety
associated to a finite simple graph to be Fano (resp.\ weak Fano) in terms of the graph,
see Theorem \ref{Fano} (resp.\ Theorem \ref{weak Fano}).
The proofs are done by using the fact
that the intersection number of the anticanonical divisor with a torus-invariant curve
can be expressed by the number of connected components
of a certain induced subgraph (see Proposition \ref{Zelevinsky} and Lemma \ref{a}),
and by using graph-theoretic arguments.

The structure of the paper is as follows.
In Section 2, we review the construction of a toric variety
from a finite simple graph and we prepare some propositions for our proofs.
In Section 3, we give a condition for the toric variety to be Fano or weak Fano.

\begin{acknowledgement}
The author was supported by JSPS KAKENHI Grant Number 15J01000.
The author wishes to thank his supervisor, Professor Mikiya Masuda,
for his continuing support,
and Professor Akihiro Higashitani for his useful comments.
\end{acknowledgement}

\section{Toric varieties associated to graphs}

We fix a notation. Let $G$ be a finite simple graph.
We denote by $V(G)$ and $E(G)$ its node set and edge set respectively.
For $I \subset V(G)$, we denote by $G|_I$ the induced subgraph.
The {\it graphical building set} $B(G)$ of $G$ is defined to be
$\{I \subset V(G) \mid G|_I$ is connected, $I \ne \emptyset\}$.

We review the construction of a nonsingular projective toric variety from
a finite simple graph $G$.
In this paper we construct a toric variety from $G$ directly
(without using the graph associahedron).
First, suppose that $G$ is connected.
A subset $N$ of $B(G)$ is called a {\it nested set} if the following conditions are satisfied:
\begin{enumerate}
\item If $I, J \in N$, then we have either $I \subset J$ or $J \subset I$
or $I \cap J=\emptyset$.
\item If $I, J \in N$ and $I \cap J=\emptyset$, then $I \cup J \notin B(G)$.
\item $V(G) \in N$.
\end{enumerate}

\begin{remark}
The above definition of nested sets is different from the one in \cite[Definition 7.3]{Pos06}.
However, the two definitions are equivalent
for the graphical building set of a finite simple connected graph \cite[8.4]{Pos06}.
\end{remark}

The set $\mathcal{N}(B(G))$ of all nested sets of $B(G)$
is called the {\it nested complex}.

Let $V(G)=\{1, \ldots, n+1\}$.
We denote by $e_1, \ldots, e_n$ the standard basis for $\mathbb{R}^n$.
We put $e_{n+1}=-e_1-\cdots-e_n$ and $e_I=\sum_{i \in I}e_i$ for $I \subset V(G)$.
For $N \in \mathcal{N}(B(G))$,
we denote by $\mathbb{R}_{\geq 0}N$ the cone $\sum_{I \in N}\mathbb{R}_{\geq 0}e_I$,
where $\mathbb{R}_{\geq 0}$ is the set of non-negative real numbers.
The dimension of $\mathbb{R}_{\geq 0}N$ is $|N|-1$
since $V(G) \in N$ and $e_{V(G)}=0$.
We define $\Delta(G)=\{\mathbb{R}_{\geq 0}N \mid N \in \mathcal{N}(B(G))\}$.
Note that $\Delta(G)$ and $\mathcal{N}(B(G))$ are isomorphic
as ordered (by inclusion) sets.
$\Delta(G)$ is a nonsingular fan in $\mathbb{R}^n$
and the associated toric variety $X(\Delta(G))$ of complex dimension $n$
is nonsingular and projective.
In fact, $\Delta(G)$ is the normal fan of the graph associahedron of $G$
(see, for example \cite[8.4]{Pos06}).

If a finite simple graph $G$ is disconnected,
then we define $X(\Delta(G))$ to be the product of toric varieties
associated to connected components of $G$.

For a nonsingular complete fan $\Delta$ in $\mathbb{R}^n$ and $0 \leq r \leq n$,
We denote by $\Delta(r)$ the set of $r$-dimensional cones of $\Delta$.
We define a map $a:\Delta(n-1) \rightarrow \mathbb{Z}$ as follows.
For $\tau \in \Delta(n-1)$,
we take primitive vectors $v_1, \ldots, v_{n-1}$ such that
$\tau=\mathbb{R}_{\geq 0}v_1+\cdots+\mathbb{R}_{\geq 0}v_{n-1}$.
There exist distinct primitive vectors $v, v' \in \mathbb{Z}^n$
and integers $a_1, \ldots, a_{n-1}$ such that
$\tau+\mathbb{R}_{\geq 0}v$ and $\tau+\mathbb{R}_{\geq 0}v'$ are in $\Delta(n)$
and $v+v'+a_1v_1+\cdots+a_{n-1}v_{n-1}=0$.
Then we define $a(\tau)=a_1+\cdots+a_{n-1}$.
Note that the intersection number $(-K_{X(\Delta)}.V(\tau))$ is $2+a(\tau)$,
where $V(\tau)$ is the subvariety of $X(\Delta)$ corresponding to $\tau$
(see, for example \cite{Oda88}).

\begin{prop}\label{criterion}
Let $X(\Delta)$ be a nonsingular projective toric variety of complex dimension $n$.
Then the following hold:
\begin{enumerate}
\item $X(\Delta)$ is Fano if and only if $a(\tau) \geq -1$ for every $\tau \in \Delta(n-1)$.
\item $X(\Delta)$ is weak Fano if and only if $a(\tau) \geq -2$ for every $\tau \in \Delta(n-1)$.
\end{enumerate}
\end{prop}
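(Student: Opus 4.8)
\emph{Proof proposal.}
The plan is to combine the standard toric criterion for ampleness and nefness of a Cartier divisor with the intersection formula $(-K_{X(\Delta)}.V(\tau)) = 2 + a(\tau)$ recalled above, and then to handle the ``big'' half of the weak Fano condition by a lattice-point count.

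First I would recall that on a complete nonsingular toric variety a Cartier divisor is ample (resp.\ nef) if and only if it has positive (resp.\ non-negative) intersection number with every torus-invariant curve $V(\tau)$, $\tau \in \Delta(n-1)$; equivalently, the classes of the $V(\tau)$ generate the Mori cone (see, e.g., \cite{Oda88}). Applying this to the Cartier divisor $-K_{X(\Delta)}$ and using that $(-K_{X(\Delta)}.V(\tau)) = 2 + a(\tau)$ is an integer, we obtain at once that $X(\Delta)$ is Fano if and only if $2 + a(\tau) > 0$, i.e.\ $a(\tau) \geq -1$, for every $\tau \in \Delta(n-1)$, which is (1); and that $-K_{X(\Delta)}$ is nef if and only if $2 + a(\tau) \geq 0$, i.e.\ $a(\tau) \geq -2$, for every $\tau \in \Delta(n-1)$.

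To conclude (2) it then suffices to show that if $-K_{X(\Delta)}$ is nef then it is big, so that the weak Fano condition reduces to nefness. For this I would use that $-K_{X(\Delta)} = \sum_\rho D_\rho$ corresponds to the polytope $P = \{u \in M_\mathbb{R} \mid \langle u, v_\rho \rangle \geq -1 \text{ for every ray } \rho \text{ of } \Delta\}$, where $v_\rho$ is the primitive generator of $\rho$ and $M$ is the character lattice. Since $\langle 0, v_\rho \rangle = 0 > -1$, the origin lies in the interior of $P$, so $P$ is $n$-dimensional; as $H^0(X(\Delta), \mathcal{O}(-mK_{X(\Delta)}))$ has dimension $\#(mP \cap M)$, this grows like $m^n$, hence $-K_{X(\Delta)}$ is big (in fact this shows the anticanonical divisor of any complete nonsingular toric variety is big). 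Combining with the previous paragraph gives that $X(\Delta)$ is weak Fano if and only if $a(\tau) \geq -2$ for every $\tau \in \Delta(n-1)$, which is (2).

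I do not expect a serious obstacle: the only nontrivial input is that the torus-invariant curves span the Mori cone of a complete toric variety, which I would simply cite, and the remaining work is bookkeeping with the integers $a(\tau)$. The single point that deserves care is the ``big'' half of the weak Fano statement, which is why I isolate the observation that the anticanonical polytope always contains the origin in its interior.
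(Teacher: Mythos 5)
Your proposal is correct, and for part (1) and the ``nef'' half of part (2) it runs along exactly the same lines as the paper: apply the toric criterion that ampleness (resp.\ nefness) of the Cartier divisor $-K_{X(\Delta)}$ is detected by its intersection numbers $2+a(\tau)$ with the torus-invariant curves $V(\tau)$, and use integrality to translate positivity into $a(\tau)\geq -1$ and non-negativity into $a(\tau)\geq -2$. The one genuine difference is how the ``big'' half of the weak Fano condition is handled: the paper simply cites Sato's result that for toric varieties weak Fano is equivalent to $-K_{X(\Delta)}$ being nef, whereas you reprove the relevant fact from scratch by observing that the anticanonical polytope $P=\{u \mid \langle u, v_\rho\rangle \geq -1 \text{ for all rays } \rho\}$ contains the origin in its interior, hence is full-dimensional, so $h^0(X(\Delta), \mathcal{O}(-mK_{X(\Delta)}))=\#(mP\cap M)$ grows like $m^n$ and $-K_{X(\Delta)}$ is automatically big on any complete nonsingular toric variety. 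That lattice-point argument is sound (the equality $h^0(\mathcal{O}(D))=\#(P_D\cap M)$ holds for any torus-invariant Cartier divisor on a complete toric variety, and $P_{mD}=mP_D$), so your route buys a self-contained proof at the cost of a little extra work, while the paper's route is shorter but leans on the external reference; mathematically the two are equivalent.
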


(1) follows from the fact that $X(\Delta)$ is Fano if and only if
the intersection number $(-K_{X(\Delta)}.V(\tau))=2+a(\tau)$ is positive
for every $\tau \in \Delta(n-1)$ \cite[Lemma 2.20]{Oda88}.
In the case of toric varieties, 
$X(\Delta)$ is weak Fano if and only if the anticanonical divisor $-K_{X(\Delta)}$ is nef
\cite[Proposition 6.17]{Sat00}.
Since $-K_{X(\Delta)}$ is nef if and only if
$(-K_{X(\Delta)}.V(\tau))=2+a(\tau)$ is non-negative
for every $\tau \in \Delta(n-1)$, we get (2).

\begin{prop}\label{product}
Let $X(\Delta)$ and $X(\Delta')$ be nonsingular projective toric varieties
of complex dimension $m$ and $n$, respectively.
Then $X(\Delta) \times X(\Delta')$ is Fano (resp.\ weak Fano)
if and only if $X(\Delta)$ and $X(\Delta')$ are Fano (resp.\ weak Fano).
\end{prop}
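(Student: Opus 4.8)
The plan is to reduce the statement to the combinatorial criterion of Proposition \ref{criterion}, applied to the product fan. The toric variety $X(\Delta) \times X(\Delta')$ is the toric variety $X(\Delta \times \Delta')$ of the fan $\Delta \times \Delta' = \{\sigma \times \sigma' \mid \sigma \in \Delta,\ \sigma' \in \Delta'\}$ in $\mathbb{R}^m \oplus \mathbb{R}^n = \mathbb{R}^{m+n}$; it is nonsingular and complete because $\Delta$ and $\Delta'$ are, and it is projective because a product of projective varieties is projective. Hence Proposition \ref{criterion} applies to it, and it suffices to compare the values of the function $a$ for $\Delta \times \Delta'$ with those of $a$ for $\Delta$ and for $\Delta'$.

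First I would identify the relevant cones. The maximal cones of $\Delta \times \Delta'$ are exactly the products $\sigma \times \sigma'$ with $\sigma \in \Delta(m)$ and $\sigma' \in \Delta'(n)$, and its codimension-one cones are exactly those of the form $\tau \times \sigma'$ with $\tau \in \Delta(m-1)$, $\sigma' \in \Delta'(n)$, or $\sigma \times \tau'$ with $\sigma \in \Delta(m)$, $\tau' \in \Delta'(n-1)$. By symmetry it is enough to treat a cone $\tau \times \sigma'$ of the first kind. Let $v_1, \ldots, v_{m-1}$ be primitive generators of $\tau$, let $v, v' \in \mathbb{Z}^m$ and $a_1, \ldots, a_{m-1} \in \mathbb{Z}$ be as in the definition of $a(\tau)$, so that $\tau + \mathbb{R}_{\geq 0}v$ and $\tau + \mathbb{R}_{\geq 0}v'$ are in $\Delta(m)$ and $v + v' + a_1 v_1 + \cdots + a_{m-1} v_{m-1} = 0$, and let $w_1, \ldots, w_n$ be primitive generators of $\sigma'$.

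Then $a(\tau \times \sigma') = a(\tau)$. Indeed, $v_1, \ldots, v_{m-1}, w_1, \ldots, w_n$ are primitive generators of $\tau \times \sigma'$ --- still primitive in $\mathbb{Z}^{m+n}$ since $\mathbb{Z}^m$ and $\mathbb{Z}^n$ are direct summands of $\mathbb{Z}^{m+n}$ --- the two maximal cones of $\Delta \times \Delta'$ containing $\tau \times \sigma'$ are $(\tau + \mathbb{R}_{\geq 0}v) \times \sigma'$ and $(\tau + \mathbb{R}_{\geq 0}v') \times \sigma'$, the vectors $v, v'$ remain distinct and primitive in $\mathbb{Z}^{m+n}$, and the defining relation becomes $v + v' + a_1 v_1 + \cdots + a_{m-1} v_{m-1} + 0 \cdot w_1 + \cdots + 0 \cdot w_n = 0$. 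Hence $a(\tau \times \sigma') = a_1 + \cdots + a_{m-1} = a(\tau)$, and symmetrically $a(\sigma \times \tau') = a(\tau')$. Therefore the set of values taken by $a$ on $(\Delta \times \Delta')(m+n-1)$ is exactly the union of the values taken by $a$ on $\Delta(m-1)$ and on $\Delta'(n-1)$.

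Finally I would conclude via Proposition \ref{criterion}. By part (1), $X(\Delta) \times X(\Delta')$ is Fano if and only if $a(\rho) \geq -1$ for all $\rho \in (\Delta \times \Delta')(m+n-1)$, which by the previous paragraph holds if and only if $a(\tau) \geq -1$ for all $\tau \in \Delta(m-1)$ and $a(\tau') \geq -1$ for all $\tau' \in \Delta'(n-1)$, i.e.\ if and only if both $X(\Delta)$ and $X(\Delta')$ are Fano; the weak Fano case is identical, using part (2) with the bound $-2$. The only point needing care is the combinatorial bookkeeping in the middle steps --- correctly listing the codimension-one cones of a product fan and checking that enlarging the lattice preserves primitivity and the relation defining $a$ --- after which the result is immediate.
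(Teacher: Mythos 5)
Your proof is correct and follows essentially the same route as the paper: identify $X(\Delta)\times X(\Delta')$ with $X(\Delta\times\Delta')$, observe that the codimension-one cones of the product fan are $\tau\times\sigma'$ or $\sigma\times\tau'$, check $a(\tau\times\sigma')=a(\tau)$ and $a(\sigma\times\tau')=a(\tau')$, and conclude by Proposition \ref{criterion}. The only difference is that you spell out the primitivity and relation checks which the paper leaves implicit.
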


\begin{proof}
We have $X(\Delta) \times X(\Delta')=X(\Delta \times \Delta')$,
where $\Delta \times \Delta'=\{\sigma \times \sigma'
\mid \sigma \in \Delta, \sigma' \in \Delta'\}$,
and any $(m+n-1)$-dimensional cone in $\Delta \times \Delta'$ is of the form
$\tau \times \sigma'$ for some $\tau \in \Delta(m-1)$ and $\sigma' \in \Delta'(n)$,
or $\sigma \times \tau'$ for some $\sigma \in \Delta(m)$ and $\tau' \in \Delta'(n-1)$.
Hence the proposition holds from
$a(\tau \times \sigma')=a(\tau), a(\sigma \times \tau')=a(\tau')$
and Proposition \ref{criterion}.
\end{proof}

\begin{prop}\label{Zelevinsky}
Let $G$ be a finite simple connected graph with $V(G)=\{1, \ldots, n+1\}$
and let $N \in \mathcal{N}(B(G))$ with $|N|=n$.
Then the following hold:
\begin{enumerate}
\item There exists a pair $\{J, J'\} \subset B(G) \setminus N$ such that
$N \cup \{J\}, N \cup \{J'\} \in \mathcal{N}(B(G))$ and $J \cup J' \in N$
\cite[Corollary 7.5]{Zel06}.
\item If $G|_{I_1}, \ldots, G|_{I_m}$ are the connected components of $G|_{J \cap J'}$,
then we have $I_1, \ldots, I_m \in N$ and
$e_J+e_{J'}-e_{I_1}-\cdots-e_{I_m}-e_{J \cup J'}=0$
\cite[Proposition 4.5 and Corollary 7.6]{Zel06}.
\end{enumerate}
\end{prop}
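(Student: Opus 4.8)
The plan is to read off (1) from the completeness and nonsingularity of $\Delta(G)$ combined with Zelevinsky's description of the nested complex, and to read off (2) from the same source together with an elementary indicator-vector computation.

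For (1): since $\Delta(G)$ is a complete nonsingular fan in $\mathbb{R}^n$ and $\mathbb{R}_{\geq 0}N$ is $(n-1)$-dimensional, $\mathbb{R}_{\geq 0}N$ is a common facet of exactly two $n$-dimensional cones of $\Delta(G)$; this last fact is local and follows by passing to the quotient fan in $\mathbb{R}^n/\mathrm{span}(\mathbb{R}_{\geq 0}N)\cong\mathbb{R}$, which has exactly two maximal cones. Under the poset isomorphism $\Delta(G)\cong\mathcal{N}(B(G))$ these two cones correspond to maximal nested sets containing $N$, and a maximal nested set has $n+1$ elements, so they are $N\cup\{J\}$ and $N\cup\{J'\}$ with $J\ne J'$ and $J,J'\in B(G)\setminus N$. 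Since $N\cup\{J,J'\}$ is not a nested set (its corresponding cone would have dimension $n+1$) while $N\cup\{J\}$ and $N\cup\{J'\}$ are, the failure must come from the pair $\{J,J'\}$: thus $J$ and $J'$ are incomparable, and either $J\cap J'\neq\emptyset$, or $J\cap J'=\emptyset$ and $J\cup J'\in B(G)$; in either case $J\cup J'\in B(G)$, because $G|_J$ and $G|_{J'}$ are connected. That $J\cup J'$ in fact lies in $N$ is \cite[Corollary 7.5]{Zel06}, which I would invoke after checking that Zelevinsky's notions of building set and nested set specialize to ours for $B(G)$.

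For (2): the connected components $I_1,\dots,I_m$ of $G|_{J\cap J'}$ are nonempty and connected, hence belong to $B(G)$. That they all lie in $N$ is \cite[Proposition 4.5 and Corollary 7.6]{Zel06}, once translated. Granting this, the displayed relation is bookkeeping: $\{I_1,\dots,I_m\}$ partitions the vertex set $J\cap J'$, so $e_{I_1}+\cdots+e_{I_m}=e_{J\cap J'}$, and since $e_J+e_{J'}=e_{J\cup J'}+e_{J\cap J'}$ for indicator vectors, we get $e_J+e_{J'}-e_{I_1}-\cdots-e_{I_m}-e_{J\cup J'}=0$. It is worth recording that the primitive generators of $\mathbb{R}_{\geq 0}N$ are exactly the $e_I$ with $I\in N\setminus\{V(G)\}$, using $e_{V(G)}=0$ and nonsingularity of $\Delta(G)$; this identifies the relation above with the wall relation $v+v'+a_1v_1+\cdots+a_{n-1}v_{n-1}=0$ defining the map $a$, with $v=e_J$ and $v'=e_{J'}$, which is how the proposition will be used.

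The main obstacle is the dictionary with \cite{Zel06}: one must confirm that the nested sets and building sets considered there really specialize to the definitions given here, and, crucially, that Zelevinsky's statements supply both $J\cup J'\in N$ and $I_1,\dots,I_m\in N$ for the \emph{same} pair $\{J,J'\}$ produced by the completeness argument, not merely the existence of two extensions of $N$. Everything else, namely the local fan argument and the indicator-vector identity, is routine.
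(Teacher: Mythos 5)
Your proposal is correct and follows essentially the same route as the paper, which offers no independent proof but simply cites \cite{Zel06} for both the existence of the pair $\{J,J'\}$ with $J\cup J'\in N$ and the membership $I_1,\ldots,I_m\in N$; the parts you work out yourself (the wall of a complete nonsingular fan lying in exactly two maximal cones, $J\cup J'\in B(G)$, and the indicator-vector identity $e_J+e_{J'}=e_{J\cup J'}+e_{I_1}+\cdots+e_{I_m}$) are routine and correct. Your closing caveat about matching Zelevinsky's conventions is exactly the dictionary the paper implicitly relies on (cf.\ its Remark on the equivalence of nested-set definitions), so nothing further is needed.
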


The following lemma follows immediately from Proposition \ref{Zelevinsky}.

\begin{lemma}\label{a}
Let $G$ be a finite simple connected graph
and let $N \in \mathcal{N}(B(G))$ with $|N|=|V(G)|-1$.
Then we have
\begin{equation*}
a(\mathbb{R}_{\geq0}N)=\left\{\begin{array}{ll}
-m & (J \cup J'=V(G)), \\
-m-1 & (J \cup J' \subsetneq V(G)), \end{array}\right.
\end{equation*}
where $\{J, J'\} \subset B(G) \setminus N$ is the pair in Proposition \ref{Zelevinsky}
and $m$ is the number of connected components of $G|_{J \cap J'}$.
\end{lemma}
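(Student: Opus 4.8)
\emph{Proof proposal.}
The plan is to unwind the definition of the map $a$ directly at the cone $\tau=\mathbb{R}_{\geq0}N$. Since $V(G)\in N$ and $e_{V(G)}=0$, the cone $\tau$ is spanned by the $n-1$ vectors $e_I$ with $I\in N$, $I\neq V(G)$; these are primitive, and they are linearly independent because $\dim\tau=|N|-1=n-1$. By Proposition \ref{Zelevinsky}(1) there is a pair $\{J,J'\}\subset B(G)\setminus N$ with $N\cup\{J\},N\cup\{J'\}\in\mathcal{N}(B(G))$ and $J\cup J'\in N$; here $J\neq J'$, since otherwise $J=J\cup J'\in N$. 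Because $\Delta(G)$ and $\mathcal{N}(B(G))$ are isomorphic as ordered sets, $\mathbb{R}_{\geq0}(N\cup\{J\})$ and $\mathbb{R}_{\geq0}(N\cup\{J'\})$ are two distinct $n$-dimensional cones of $\Delta(G)$, each having $\tau$ as a facet; as $\Delta(G)$ is complete these are the only two. Hence in the definition of $a(\tau)$ we may take $v=e_J$ and $v'=e_{J'}$ (distinct primitive vectors, both nonzero since $J,J'\neq V(G)$), so that $a(\tau)=\sum_I a_I$ where the integers $a_I$ ($I\in N$, $I\neq V(G)$) are determined by $e_J+e_{J'}+\sum_I a_Ie_I=0$.

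Next I would feed in the linear relation of Proposition \ref{Zelevinsky}(2): writing $G|_{I_1},\dots,G|_{I_m}$ for the connected components of $G|_{J\cap J'}$, we have $I_1,\dots,I_m\in N$ and
\[
e_J+e_{J'}=e_{I_1}+\cdots+e_{I_m}+e_{J\cup J'}.
\]
The sets $I_1,\dots,I_m$ are pairwise disjoint and nonempty, hence pairwise distinct; and since $V(G)\in N$ forces $J\subsetneq V(G)$, we get $J\cap J'\subsetneq V(G)$, so each $I_k\subsetneq V(G)$ and the $e_{I_k}$ occur among the spanning vectors of $\tau$.

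If $J\cup J'=V(G)$, then $e_{J\cup J'}=0$, so $e_J+e_{J'}+\sum_{k=1}^m(-1)e_{I_k}=0$; by the linear independence of the spanning vectors of $\tau$ this is exactly the relation defining the $a_I$, with $a_{I_k}=-1$ for each $k$ and all other $a_I=0$, whence $a(\tau)=-m$. If instead $J\cup J'\subsetneq V(G)$, then $J\cup J'\in N\setminus\{V(G)\}$ and $e_{J\cup J'}\neq0$; moreover $J\cup J'\neq I_k$ for every $k$, since $I_k\subseteq J\cap J'\subsetneq J\cup J'$ (the last inclusion being strict because $J\neq J'$). Thus $I_1,\dots,I_m,J\cup J'$ are $m+1$ distinct spanning vectors, the relation reads $e_J+e_{J'}+\sum_{k=1}^m(-1)e_{I_k}+(-1)e_{J\cup J'}=0$, and $a(\tau)=-m-1$.

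The one point that genuinely needs care — and which I would flag as the crux of the argument — is the combinatorial bookkeeping ensuring that $I_1,\dots,I_m$ (and $J\cup J'$ in the second case) are distinct from one another and from $V(G)$: this is precisely what makes the relation of Proposition \ref{Zelevinsky}(2), after substitution, literally the relation appearing in the definition of $a(\tau)$, so that its coefficients may be read off directly. Everything else is routine substitution.
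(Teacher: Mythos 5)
Your argument is correct and is exactly the intended one: the paper simply states that the lemma ``follows immediately from Proposition \ref{Zelevinsky}'', and your write-up spells out that implicit computation (taking $v=e_J$, $v'=e_{J'}$ and reading the coefficients off Zelevinsky's relation, with the case split on whether $e_{J\cup J'}$ vanishes). The distinctness bookkeeping you flag is indeed the only point needing care, and you handle it correctly.
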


\section{Main results}

First we characterize finite simple graphs
whose associated toric varieties are Fano.

\begin{theorem}\label{Fano}
Let $G$ be a finite simple graph.
Then the associated nonsingular projective toric variety $X(\Delta(G))$ is Fano if and only if
each connected component of $G$ has at most three nodes.
\end{theorem}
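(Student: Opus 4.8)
The plan is to reduce to the case of a connected graph and then read off the answer from the criterion in Proposition \ref{criterion}(1) combined with the formula in Lemma \ref{a}. By Proposition \ref{product}, applied inductively to the connected components of $G$, the variety $X(\Delta(G))$ is Fano if and only if $X(\Delta(H))$ is Fano for every connected component $H$ of $G$; so we may assume that $G$ is connected, with $V(G) = \{1, \dots, n+1\}$. By Proposition \ref{criterion}(1) and Lemma \ref{a}, $X(\Delta(G))$ is Fano if and only if $a(\mathbb{R}_{\ge 0} N) \ge -1$ for every $N \in \mathcal{N}(B(G))$ with $|N| = n$. Fix such an $N$ and let $\{J, J'\}$ be the pair provided by Proposition \ref{Zelevinsky}(1). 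Since $J \cup J' \in N$ while $J, J' \notin N$, the sets $J$ and $J'$ are incomparable, and since $V(G) \in N$ we have $J, J' \subsetneq V(G)$.

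For the ``if'' direction, suppose $n + 1 \le 3$. If $n = 0$ then $X(\Delta(G))$ is a point; so assume $|V(G)| \in \{2, 3\}$. Then $J$ and $J'$ are incomparable proper subsets of $V(G)$, so each has one or two elements. If at least one of them, say $J$, is a singleton, then incomparability forces $J \cap J' = \emptyset$, so $m = 0$ and Lemma \ref{a} gives $a(\mathbb{R}_{\ge 0} N) \in \{0, -1\}$. Otherwise $|J| = |J'| = 2$, which forces $|V(G)| = 3$, so $|J \cap J'| = 1$ and $|J \cup J'| = 3$; hence $J \cap J'$ is a single node (so $m = 1$) and $J \cup J' = V(G)$, and Lemma \ref{a} gives $a(\mathbb{R}_{\ge 0} N) = -1$. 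In every case $a(\mathbb{R}_{\ge 0} N) \ge -1$, so $X(\Delta(G))$ is Fano.

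For the ``only if'' direction we argue by contraposition: suppose $G$ has a connected component $H$ with $|V(H)| \ge 4$, and let us show that $X(\Delta(H))$ — and hence, by Proposition \ref{product}, $X(\Delta(G))$ — is not Fano. Since $H$ is connected with at least three nodes, it has a node $a$ of degree at least $2$ (otherwise $H$ would be a disjoint union of isolated nodes and single edges); fix two neighbours $b, c$ of $a$. Because $H$ is connected, we can order $V(H) = \{1, \dots, n+1\}$, where $n+1 = |V(H)| \ge 4$, so that $1 = a$, $2 = b$, $3 = c$ and $H|_{\{1, \dots, k\}}$ is connected for every $k$ (adjoin, one at a time, a node adjacent to the set already chosen). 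Set
\[
N = \{\{1\}\} \cup \{\, \{1, \dots, k\} \mid 3 \le k \le n+1 \,\},
\]
the maximal flag $\{1\} \subsetneq \{1,2\} \subsetneq \cdots \subsetneq V(H)$ with its member $\{1,2\}$ deleted. Then $N$ is a chain of elements of $B(H)$ containing $V(H)$, so $N \in \mathcal{N}(B(H))$ and $|N| = n$. The two elements of $B(H)$ whose addition completes $N$ to a maximal nested set are $\{1,2\} = \{a,b\}$ and $\{1,3\} = \{a,c\}$ (both connected), so the pair of Proposition \ref{Zelevinsky}(1) is $\{J, J'\} = \{\{1,2\}, \{1,3\}\}$. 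Now $J \cap J' = \{1\}$, so $m = 1$, while $J \cup J' = \{1,2,3\} \subsetneq V(H)$ because $|V(H)| \ge 4$; hence Lemma \ref{a} gives $a(\mathbb{R}_{\ge 0} N) = -m - 1 = -2$, and $X(\Delta(H))$ is not Fano by Proposition \ref{criterion}(1).

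The step that requires genuine care is the claim in the last paragraph that $\{1,2\}$ and $\{1,3\}$ are the only two elements of $B(H)$ that can be added to $N$ to obtain a maximal nested set. Compatibility of a candidate $J''$ with the members $\{1\}$ and $\{1,2,3\}$ of $N$ confines a candidate lying ``between'' them to $\{1,2\}$ or $\{1,3\}$; meanwhile any $J''$ disjoint from $\{1\}$, or reaching into the ``tail'' $\{1,2,3\} \subsetneq \{1,2,3,4\} \subsetneq \cdots$ of $N$, is excluded by conditions (1) and (2) in the definition of a nested set, using that each $\{1, \dots, k\}$, as well as $\{a,b\}$ and $\{a,c\}$, lies in $B(H)$. (Alternatively, one may simply invoke the general fact that a codimension-one cone of the nonsingular complete fan $\Delta(H)$ is a face of exactly two maximal cones, so that $N$ has exactly two such completions.) The remaining points — the reduction over connected components, the two low-dimensional cases, and the count $|N| = n$ — are routine.
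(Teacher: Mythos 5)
Your proof is correct, but it takes a genuinely different route from the paper on both implications. For the ``only if'' direction the paper does not construct any explicit curve: it combines Casagrande's upper bound on the number of rays of the fan of a toric Fano variety with the Buchstaber--Volodin lower bound for $f$-vectors of graph associahedra (the path graph minimizes, giving $|(\Delta(G))(1)| \geq \frac{(n+1)(n+2)}{2}-1$), and observes that the two bounds are incompatible once $n \geq 3$. You instead exhibit, for any connected $H$ with $|V(H)| \geq 4$, the explicit codimension-one nested set $N=\{\{1\}\}\cup\{\{1,\dots,k\}\mid 3\le k\le n+1\}$ whose Zelevinsky pair is $\{\{1,2\},\{1,3\}\}$, giving $a(\mathbb{R}_{\geq 0}N)=-2$; this is exactly the style of argument the paper reserves for the weak Fano theorem, and your identification of the pair is adequately justified (the cleanest justification being, as you note, that a facet of the complete simplicial fan $\Delta(H)$ lies in exactly two maximal cones, together with the stated poset isomorphism between $\mathcal{N}(B(H))$ and $\Delta(H)$). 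For the ``if'' direction the paper simply lists the connected graphs on at most three nodes and recognizes the varieties as a point, $\mathbb{P}^1$, and $\mathbb{P}^2$ blown up at two or three points, whereas you verify $a(\mathbb{R}_{\geq 0}N)\geq -1$ directly from Lemma \ref{a} by a short case analysis on the pair $\{J,J'\}$; both are fine. The trade-off: the paper's argument is shorter but leans on two external results (\cite{Cas06}, \cite{BV11}) and on the classification of del Pezzo surfaces in low dimension, while yours is self-contained, uses only Proposition \ref{criterion}, Proposition \ref{Zelevinsky} and Lemma \ref{a}, and makes the Fano and weak Fano proofs methodologically uniform.
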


\begin{proof}
By Proposition \ref{product},
it suffices to show that for a finite simple connected graph $G$,
the toric variety $X(\Delta(G))$ is Fano if and only if $|V(G)| \leq 3$.

Let $V(G)=\{1, \ldots, n+1\}$. If the toric variety $X(\Delta(G))$ is Fano,
then we have $|(\Delta(G))(1)| \leq 3n$ when $n$ is even,
and $|(\Delta(G))(1)| \leq 3n-1$ when $n$ is odd \cite{Cas06}.
On the other hand, the lower bound for $f$-vectors of graph associahedra
is achieved for the graph associahedron corresponding to the path graph \cite{BV11}.
In particular, we have
$|(\Delta(G))(1)| \geq |(\Delta(L_{n+1}))(1)|=|B(L_{n+1})|-1=\cfrac{(n+1)(n+2)}{2}-1$,
where $L_{n+1}$ is the path graph on $\{1, \ldots, n+1\}$.
Thus we have the inequalities $3n \geq \cfrac{(n+1)(n+2)}{2}-1$ when $n$ is even,
and $3n-1 \geq \cfrac{(n+1)(n+2)}{2}-1$ when $n$ is odd.
These hold only for $n \leq 2$, so $|V(G)| \leq 3$.

Conversely, if $|V(G)| \leq 3$, then $X(\Delta(G))$ must be one of the following:
\begin{enumerate}
\item $V(G)=\{1\}, E(G)=\emptyset$: a point, which is understood to be Fano.
\item $V(G)=\{1, 2\}, E(G)=\{\{1, 2\}\}$: $\mathbb{P}^1$.
\item $V(G)=\{1, 2, 3\}, E(G)=\{\{1, 2\}, \{2, 3\}\}$: $\mathbb{P}^2$
blown-up at two points.
\item $V(G)=\{1, 2, 3\}, E(G)=\{\{1, 2\}, \{1, 3\}, \{2, 3\}\}$: $\mathbb{P}^2$
blown-up at three points.
\end{enumerate}
Thus $X(\Delta(G))$ is Fano for every case. This completes the proof.
\end{proof}

We characterize graphs whose associated toric varieties are weak Fano.
We denote by $K$ the diamond graph, that is, the graph obtained
by removing an edge from the complete graph on four nodes.

\begin{figure}[htbp]
\begin{center}
\includegraphics[width=3cm]{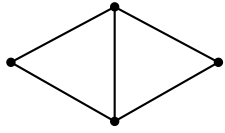}
\caption{the diamond graph $K$.}
\end{center}
\end{figure}

\begin{theorem}\label{weak Fano}
Let $G$ be a finite simple graph.
Then the associated nonsingular projective toric variety $X(\Delta(G))$ is weak Fano
if and only if for any connected component $G'$ of $G$
and for any proper subset $I$ of $V(G')$,
$G'|_I$ is neither a cycle graph of length at least four nor the diamond graph $K$.
\end{theorem}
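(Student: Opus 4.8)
The plan is to reduce immediately to the connected case via Proposition \ref{product}, so assume $G$ is connected with $V(G)=\{1,\dots,n+1\}$. By Proposition \ref{criterion}(2) we must show that $a(\mathbb{R}_{\geq 0}N)\geq -2$ for every maximal-but-one nested set $N$ (i.e.\ $|N|=n=|V(G)|-1$) if and only if no proper induced subgraph of $G$ is a long cycle or the diamond $K$. By Lemma \ref{a}, the value $a(\mathbb{R}_{\geq 0}N)$ is either $-m$ or $-m-1$, where $m$ is the number of connected components of $G|_{J\cap J'}$ for the associated pair $\{J,J'\}$. Hence $a(\mathbb{R}_{\geq 0}N)\leq -3$ forces $m\geq 2$, and more precisely: either $m\geq 3$, or $m=2$ together with $J\cup J'\subsetneq V(G)$. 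So the whole theorem becomes a purely graph-theoretic statement: $X(\Delta(G))$ fails to be weak Fano exactly when there is a nested set $N$ with $|N|=n$ whose pair $\{J,J'\}$ satisfies one of these two conditions.

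For the "if" direction (no bad subgraph $\Rightarrow$ weak Fano), I would argue contrapositively: given a nested set $N$ of size $n$ with pair $\{J,J'\}$ violating the bound, I want to exhibit a proper induced subgraph that is a long cycle or a diamond. The key structural input is Proposition \ref{Zelevinsky}: $J\cap J'$ has components $I_1,\dots,I_m\in N$, and $G|_J$, $G|_{J'}$, $G|_{J\cup J'}$ are all connected. So $G|_{J\cup J'}$ is a connected graph which, after deleting the vertex set $J\cup J'\setminus J'=J\setminus J'$ (nonempty since $J\not\subset J'$) stays connected ($=G|_{J'}$), and likewise for $J\setminus J$; contracting each component $I_i$ to a point, one sees $G|_{J\cup J'}$ looks like a "theta-type" graph built from two connected pieces $J\setminus J'$, $J'\setminus J$ glued along the $m$ blobs $I_1,\dots,I_m$. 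When $m\geq 3$ one extracts a cycle of length $\geq 4$ passing through three of the blobs; when $m=2$ and $J\cup J'\subsetneq V(G)$ one extracts either a $\geq 4$-cycle or, in the tightest case where $J\setminus J'$ and $J'\setminus J$ are single vertices adjacent to both of two adjacent blobs, the diamond $K$ — and $J\cup J'\subsetneq V(G)$ guarantees this subgraph is proper. Making this extraction precise, and checking the induced (not just topological) subgraph condition, is where the real work lies.

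For the "only if" direction (bad subgraph $\Rightarrow$ not weak Fano), suppose $G|_I$ is a cycle $C_k$ ($k\geq 4$) or $K$ for some proper $I\subsetneq V(G)$. I would construct an explicit nested set $N$ of size $n$ realizing $a(\mathbb{R}_{\geq 0}N)\leq -3$. Label the cycle $I=\{i_1,\dots,i_k\}$; take $J=\{i_1,i_2,\dots,i_{k-1}\}$ and $J'=\{i_2,i_3,\dots,i_k\}$ (paths, hence in $B(G)$), so $J\cup J'=I\in B(G)$ and $J\cap J'=\{i_2,\dots,i_{k-1}\}$ which, inside the cycle, induces a path — that gives $m=1$, not enough; instead take $J=\{i_1,i_2\}$, $J'=\{i_1,i_k\}$ so $J\cap J'=\{i_1\}$, still $m=1$. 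The right choice is $J=I\setminus\{i_k\}$ and $J'=I\setminus\{i_1\}$ when $k\geq 4$: then $J\cap J'=\{i_2,\dots,i_{k-1}\}$ induces a path in $C_k$, so I instead pick a "non-consecutive" split. In fact the correct pair for the cycle is $J=\{i_1,i_2,i_3\}$, $J'=\{i_3,i_4,\dots,i_k,i_1\}$: overlap $\{i_1,i_3\}$, which induces two isolated vertices in $C_k$, so $m=2$, and $J\cup J'=I\subsetneq V(G)$, giving $a=-3$. For the diamond, with the two degree-$3$ vertices $u,v$ and the two degree-$2$ vertices $x,y$, take $J=\{x,u,v\}$, $J'=\{y,u,v\}$: overlap $\{u,v\}$ (an edge, $m=1$) — so instead $J=\{u,x,v\}$, $J'=\{u,y,v\}$ fails; use $J=\{x,u\}\cup\{v\}$ hmm. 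The clean choice: $J=\{x,u,v\}$ and $J'=\{y,u,v\}$ gives overlap $\{u,v\}$; to force $m=2$ take $J=\{u,x,v\}$, $J'=\{u,v,y\}$ and note $u,v$ adjacent — this is genuinely delicate and I expect the diamond case to require choosing $J,J'$ so that $J\cap J'=\{x,y\}$, which are non-adjacent in $K$, giving $m=2$; e.g.\ $J=\{x,u,y\}$, $J'=\{x,v,y\}$, both paths, $J\cap J'=\{x,y\}$, $m=2$, $J\cup J'=V(K)\subsetneq V(G)$, so $a=-3$. Then one must extend $\{$the $I_i$'s, $J\cup J'$, and enough larger sets$\}$ up to a full nested set $N$ of size $n$ containing $V(G)$ and realizing exactly this pair — this requires verifying the nestedness axioms (1)–(3) for the completion, which is routine but fiddly.

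The main obstacle is the "if" direction: proving that the \emph{only} ways to get $a\leq -3$ come from a proper long cycle or diamond. This amounts to a careful case analysis on the structure of $G|_{J\cup J'}$ — a connected graph that becomes disconnected into $\geq 2$ (or $\geq 3$) pieces in a controlled way after removing $J\setminus J'$ and after removing $J'\setminus J$ — and extracting from it an \emph{induced} cycle or diamond, together with handling the borderline "$m=2$ with $J\cup J'=V(G)$" case (which gives only $a=-2$ and must \emph{not} produce a forbidden subgraph). I would organize this as: first show $m\geq 3$ always yields an induced $C_{\geq 4}$; then show $m=2$ with $J\cup J'\subsetneq V(G)$ yields an induced $C_{\geq 4}$ or $K$; and finally, for the converse, record the explicit nested-set completions above.
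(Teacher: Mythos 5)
Your framework (reduce to the connected case, then translate weak Fanoness via Proposition \ref{criterion} and Lemma \ref{a} into a statement about the Zelevinsky pair $\{J,J'\}$) is exactly the paper's, but both directions have real gaps. The more serious one is in the ``only if'' direction, which you flag as ``routine but fiddly'': for a cycle of length $l\geq 5$ your chosen pair $J=\{i_1,i_2,i_3\}$, $J'=I\setminus\{i_2\}$ cannot in general be completed to a nested set $N$ with $|N|=n$. Any $N$ realizing this pair must contain $\{i_1\},\{i_3\},I,V(G)$, and every further element $S$ must be compatible with adding either $J$ or $J'$; a short check shows such $S$ must be $\{i_1\}$, $\{i_3\}$, a connected subset of $\{i_5,\dots,i_{l-1}\}$, a set in a chain between $I$ and $V(G)$, or a connected set with no edge to $I$ at all. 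Counting these gives at most $n-1$ elements unless $G$ happens to have vertices not adjacent to $I$: concretely, for $G$ an induced $5$-cycle $i_1\cdots i_5$ plus one vertex $v$ joined to $i_1$, no admissible fifth element exists, so your pair is not the wall pair of any $(n-1)$-cone, even though the theorem (and the paper) shows this $G$ is not weak Fano. The paper avoids this by taking $J=\{1,\dots,l-1\}$ and $J'=\{1,\dots,l-3,l-1,l\}$, whose intersection $\{1,\dots,l-3\}\cup\{l-1\}$ is large enough to support the chain $\{1\}\subset\{1,2\}\subset\cdots\subset\{1,\dots,l-3\}$ together with $\{l-1\}$, making $|N|=n$ exactly. (Your diamond pair $J=\{x,u,y\}$, $J'=\{x,v,y\}$ coincides with the paper's and is fine.)

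The ``if'' direction is where you yourself say ``the real work lies,'' and indeed it is not done: you must extract an \emph{induced} cycle of length $\geq 4$ or an induced $K$ from the configuration, not just a topological cycle. The paper does this by choosing $x\in I_1$, $x'\in I_2$, simple paths from $x$ to $x'$ inside $G|_J$ and $G|_{J'}$, and indices $p,q$ producing two internally disjoint paths of length $\geq 2$ between non-adjacent vertices, hence a (possibly chorded) cycle of length $\geq 4$; it then runs an explicit chord-elimination case analysis (the cases $\{2,l\}\notin E(G)$ and $\{2,l\}\in E(G)$) to land on an induced $C_{\geq 4}$ or $K$. Note also the properness issue in your $m\geq 3$ case, where $J\cup J'$ may equal $V(G)$: the paper settles it by verifying separately that when $G$ itself is a cycle of length $\geq 4$ or $K$, every wall has $a\geq -2$, so $X(\Delta(G))$ is weak Fano and the extracted forbidden subgraph is forced to be proper; your sketch only secures properness in the $m=2$ case. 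Until the nested-set construction for long cycles is repaired and the induced-subgraph extraction plus properness check are carried out, the proposal does not constitute a proof.
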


\begin{example}
\begin{enumerate}
\item If $G$ is a cycle graph or $K$,
then the associated toric variety is weak Fano.
\item Toric varieties associated to trees and complete graphs are weak Fano.
\item The toric variety associated to the left graph in Figure \ref{ex} is weak Fano,
but the toric variety associated to the right graph is not weak Fano
because it has a cycle graph of length four as a proper induced subgraph.
\end{enumerate}
\end{example}

\begin{figure}[htbp]
\begin{center}
\includegraphics[width=2.5cm]{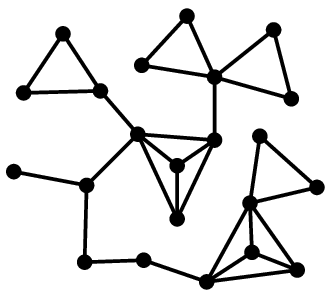}
\hspace{1.5cm}
\includegraphics[width=1.5cm]{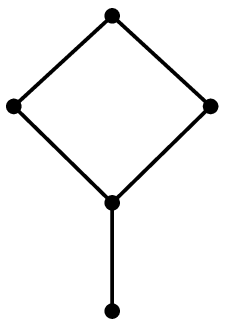}
\caption{examples.}
\label{ex}
\end{center}
\end{figure}

\begin{proof}[Proof of Theorem \ref{weak Fano}]
By Proposition \ref{product},
it suffices to show that for a finite simple connected graph $G$,
the toric variety $X(\Delta(G))$ is weak Fano if and only if
for any $I \subsetneq V(G)$, $G|_I$ is neither a cycle graph of length $\geq 4$ nor $K$.

First we show the necessity.
Suppose that there exists $I \subsetneq V(G)$
such that $G|_I$ is a cycle graph of length $l \geq 4$.
We may assume that
\begin{align*}
V(G)&=\{1, \ldots, n+1\}, n \geq l,\\
E(G|_{\{1, \ldots, l\}})&=\{\{1, 2\}, \{2, 3\}, \ldots, \{l-1, l\}, \{l, 1\}\},
\end{align*}
and $G|_{\{1, \ldots, k\}}$ is connected for every $1 \leq k \leq n+1$.
We consider the nested set
\begin{equation*}
N=\{\{1\}, \{1, 2\}, \ldots, \{1, \ldots, l-3\}, \{l-1\}, \{1, \ldots, l\},
\{1, \ldots, l+1\}, \ldots, \{1, \ldots, n+1\}\}.
\end{equation*}
The pair in Proposition \ref{Zelevinsky} is $J=\{1, \ldots, l-1\}$ and $J'=\{1, \ldots, l-3, l-1, l\}$.
Thus we have $J \cup J'=\{1, \ldots, l\} \subsetneq \{1, \ldots, n+1\}$
and $G|_{J \cap J'}=G|_{\{1, \ldots, l-3, l-1\}}$ has two connected components.
Hence we have $a(\mathbb{R}_{\geq 0}N)=-3$ by Lemma \ref{a}.
Therefore $X(\Delta(G))$ is not weak Fano by Proposition \ref{criterion}.

Suppose that there exists $I \subsetneq V(G)$ such that $G|_I$ is isomorphic to $K$.
We may assume that
\begin{align*}
V(G)&=\{1, \ldots, n+1\}, n \geq 4,\\
E(G|_{\{1, 2, 3, 4\}})&=\{\{1, 2\}, \{1, 3\}, \{1, 4\}, \{2, 3\}, \{2, 4\}\},
\end{align*}
and $G|_{\{1, \ldots, k\}}$ is connected for every $1 \leq k \leq n+1$.
We consider the nested set
\begin{equation*}
N=\{\{3\}, \{4\}, \{1, 2, 3, 4\}, \{1, 2, 3, 4, 5\}, \ldots, \{1, \ldots, n+1\}\}.
\end{equation*}
The pair in Proposition \ref{Zelevinsky} is $J=\{1, 3, 4\}$ and $J'=\{2, 3, 4\}$.
Thus we have $J \cup J'=\{1, 2, 3, 4\} \subsetneq \{1, \ldots, n+1\}$
and $G|_{J \cap J'}=G|_{\{3, 4\}}$ consists of two isolated nodes.
Hence we have $a(\mathbb{R}_{\geq 0}N)=-3$ by Lemma \ref{a}.
Therefore $X(\Delta(G))$ is not weak Fano by Proposition \ref{criterion}.

We prove the sufficiency.
Suppose that $X(\Delta(G))$ is not weak Fano.
By Proposition \ref{criterion}, there exists $N \in \mathcal{N}(B(G))$ such that
$|N|=|V(G)|-1$ and $a(\mathbb{R}_{\geq 0}N) \leq -3$.
We have the pair $\{J, J'\}$ in Proposition \ref{Zelevinsky}
and the number of connected components of $G|_{J \cap J'}$
is greater than or equal to two by Lemma \ref{a}.
Let $G|_{I_1}, \ldots, G|_{I_m}$ be the connected components of $G|_{J \cap J'}$.
We take $x \in I_1, x' \in I_2$
and simple paths $x=y_1, y_2, \ldots, y_r=x'$ in $G|_J$
and $x=z_1, z_2, \ldots, z_s=x'$ in $G|_{J'}$. Let
\begin{align*}
p&=\mathrm{max}\{1 \leq i \leq r \mid y_i \in I_1, 1 \leq \exists j \leq s: y_i=z_j\},\\
q&=\mathrm{min}\{p+1 \leq i \leq r \mid y_i \in (I_2 \cup \cdots \cup I_m) \setminus I_1,
1 \leq \exists j \leq s: y_i=z_j\}.
\end{align*}
Then we have two simple paths between $y_p$ and $y_q$.
The two paths have no common nodes except $y_p$ and $y_q$.
Since $y_p \in I_1$ and $y_q \in (I_2 \cup \cdots \cup I_m) \setminus I_1$,
we have $\{y_p, y_q\} \notin E(G)$ and the number of edges of
each path is greater than or equal to two.
Thus we obtain a simple cycle of length $\geq 4$ containing $y_p$ and $y_q$.
Hence we may assume that:
\begin{enumerate}
\item $V(G)=\{1, \ldots, n+1\}$.
\item There exists an integer $l$ such that $4 \leq l \leq n+1$
and $\{1, 2\}, \{2, 3\}, \ldots, \{l-1, l\}, \{l, 1\} \in E(G)$.
\item There exists an integer $k$ such that $3 \leq k \leq l-1$
and $\{1, k\} \notin E(G)$.
\end{enumerate}
Moreover, we may assume that $\{i, j\} \notin E(G)$ for every
\begin{itemize}
\item $1 \leq i<j \leq k$ where $j-i \geq 2$,
\item $k \leq i<j \leq l$ where $j-i \geq 2$,
\item $k+1 \leq i \leq l-1$ and $j=1$,
\end{itemize}
since if such an edge exists,
then we can replace the cycle by a shorter cycle containing the edge.

We find a cycle graph of length $\geq 4$ or $K$ as an induced graph of $G$.

{\it The case where $\{2, l\} \notin E(G)$}. We consider
\begin{align*}
i_\mathrm{min}&
=\mathrm{min}\{2 \leq i \leq k \mid k+1 \leq \exists j \leq l: \{i, j\} \in E(G)\},\\
j_\mathrm{max}&
=\mathrm{max}\{k+1 \leq j \leq l \mid \{i_\mathrm{min}, j\} \in E(G)\}.
\end{align*}
Then the induced subgraph by the subset
\begin{equation*}
\{1, 2, \ldots, i_\mathrm{min}, j_\mathrm{max}, j_\mathrm{max}+1, \ldots, l\} \subset V(G)
\end{equation*}
is a cycle graph of length $\geq 4$.

\begin{figure}[htbp]
\begin{center}
\includegraphics[width=6cm]{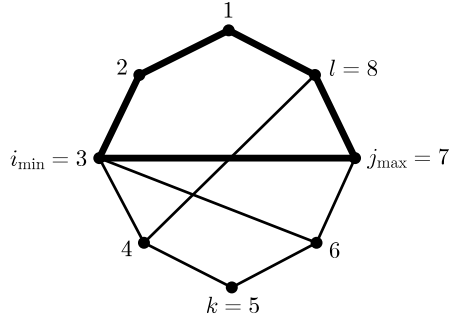}
\caption{a cycle graph as an induced subgraph.}
\end{center}
\end{figure}

{\it The case where $\{2, l\} \in E(G)$}.
If there exists an integer $j$ such that $k+1 \leq j \leq l-1$ and $\{2, j\} \in E(G)$,
then we have a cycle graph of length $\geq 4$ or $K$ as an induced subgraph.
If $\{2, j\} \notin E(G)$ for any $k+1 \leq j \leq l-1$,
then we consider
\begin{align*}
i_\mathrm{min}&
=\mathrm{min}\{3 \leq i \leq k \mid k+1 \leq \exists j \leq l: \{i, j\} \in E(G)\},\\
j_\mathrm{max}&
=\mathrm{max}\{k+1 \leq j \leq l \mid \{i_\mathrm{min}, j\} \in E(G)\}.
\end{align*}
The induced subgraph by the subset
\begin{equation*}
\{2, 3, \ldots, i_\mathrm{min}, j_\mathrm{max}, j_\mathrm{max}+1, \ldots, l\} \subset V(G)
\end{equation*}
is a cycle graph. If its length is at least four,
then we have a desired induced subgraph.
If its length is three, then we have an induced subgraph $K$
by combining the cycle graph with two edges $\{1, 2\}$ and $\{l, 1\}$.

Thus we obtain a cycle graph of length $\geq 4$ or $K$ as an induced subgraph of $G$.
If $G$ is a cycle graph of length $\geq 4$ or $K$,
it can be easily checked that for any $N \in \mathcal{N}(B(G))$ such that $|N|=|V(G)|-1$,
the number of connected components of $G|_{J \cap J'}$ is at most two.
Moreover, if the number of connected components is two,
then we must have $J \cup J'=V(G)$.
Hence we have $a(\mathbb{R}_{\geq0}N) \geq -2$ by Lemma \ref{a}.
So $X(\Delta(G))$ is weak Fano by Proposition \ref{criterion}, which is a contradiction.
Thus $G$ has a cycle graph of length $\geq 4$ or $K$ as a proper induced subgraph of $G$.
This completes the proof.
\end{proof}

\end{document}